\title{On the sum of the values of a polynomial at natural numbers which form a decreasing arithmetic progression}
\author{\sc Bakir FARHI \\
Laboratoire de Mathématiques appliquées \\
Faculté des Sciences Exactes \\
Université de Bejaia, 06000 Bejaia, Algeria \\[1mm]
\href{mailto:bakir.farhi@gmail.com}{bakir.farhi@gmail.com} \\[1mm]
\url{http://farhi.bakir.free.fr/}
}
\date{}
\let\up=\textsuperscript
\def\R{{\mathbb R}}
\def\N{{\mathbb N}}
\def\Z{{\mathbb Z}}
\def\E{\mathscr{E}}
\def\int{\mathrm{Int}}
\def\restmod#1#2{#1\ (\mathrm{mod}\ #2)} 
\newcommand{\codim}[1]{\mathrm{codim}\,#1}
\def\EMdash{\leavevmode\hbox to 10.6mm{\vrule height .63ex depth -.59ex
    width 10mm\hfill}}
\theoremstyle{plain}
\numberwithin{equation}{section}
\newtheorem{thm}{Theorem}[section]
\newtheorem{prop}[thm]{Proposition}
\newtheorem{coll}[thm]{Corollary}
\newtheorem{thmn}{Theorem}      
\theoremstyle{definition}
\newtheorem{defi}[thm]{Definition}
\theoremstyle{remark}
\newtheorem{rmk}[thm]{Remark}
\newtheorem{rmks}[thm]{Remarks}
\begin{document}
\maketitle

\vspace*{-11cm}

\begin{flushleft}
\emph{A tribute to Ibn al-Banna al-Marrakushi \\
on the 700\up{th} anniversary of his death}
\end{flushleft}

\vspace*{9cm}

\begin{abstract}
The purpose of this paper consists to study the sums of the type \linebreak $P(n) + P(n - d) + P(n - 2 d) + \dots$, where $P$ is a real polynomial, $d$ is a positive integer and the sum stops at the value of $P$ at the smallest natural number of the form $(n - k d)$ ($k \in \N$). Precisely, for a given $d$, we characterize the $\R$-vector space $\E_d$ constituting of the real polynomials $P$ for which the above sum is polynomial in $n$. The case $d = 2$ is studied in more details. In the last part of the paper, we approach the problem through formal power series; this inspires us to generalize the spaces $\E_d$ and the underlying results. Also, it should be pointed out that the paper is motivated by the curious formula: $n^2 + (n - 2)^2 + (n - 4)^2 + \dots = \frac{n (n + 1) (n + 2)}{6}$, due to Ibn al-Banna al-Marrakushi (around 1290).      
\end{abstract}
\noindent\textbf{MSC 2010:} Primary 11B68, 11C08, 13F25. \\
\textbf{Keywords:} Ibn al-Banna's formula, Bernoulli polynomials, Bernoulli numbers, Euler polynomials, Genocchi numbers.

\section{Introduction and Notation}\label{sec1}

Throughout this paper, we let $\N^*$ denote the set $\N \setminus \{0\}$ of positive integers. For $x \in \R$, we let $\lfloor x\rfloor$ denote the integer part of $x$. For a given prime number $p$, we let $\vartheta_p$ denote the usual $p$-adic valuation. For a given positive integer $n$, we call \textit{the odd part} of $n$, which we denote by $\mathrm{odd}(n)$, the greatest odd positive divisor of $n$; that is $\mathrm{odd}(n) = n / 2^{\vartheta_2(n)}$. For a given rational number $r$, \textit{the denominator} of $r$ designates the smallest positive integer $d$ such that $d r \in \Z$. Next, we let $\R[X]$ denote the $\R$-vector space of polynomials in $X$ with coefficients in $\R$. For $P \in \R[X]$, we let $\deg P$ denote the degree of $P$. Besides, for $n \in \N$, we let $\R_n[X]$ denote the $\R$-linear subspace of $\R[X]$ constituting of real polynomials with degree $\leq n$, and we set by convention $\R_{-1}[X] := \{0_{\R[X]}\}$ (the null vector subspace of $\R[X]$). For $n \in \N$, we let also $\binom{X}{n}$ denote the polynomial of $\R_n[X]$ defined by $\binom{X}{n} := \frac{X (X - 1) \cdots (X - n + 1)}{n!}$. The dimension of a vector space $E$ is denoted by $\dim E$ and the codimension of a linear subspace $F$ of a given vector space $E$ is denoted by $\codim_E F$.

Further, \textit{the Bernoulli polynomials} $B_n(X)$ ($n \in \N$) can be defined by their exponential generating function:
$$
\frac{t e^{X t}}{e^t - 1} = \sum_{n = 0}^{+ \infty} B_n(X) \frac{t^n}{n!}
$$
and \textit{the Bernoulli numbers} $B_n$ are the values of the Bernoulli polynomials at $X = 0$; that is $B_n := B_n(0)$ ($\forall n \in \N$). To make the difference between the Bernoulli polynomials and the Bernoulli numbers, we always put the indeterminate $X$ in evidence when it comes to polynomials. Similarly, \textit{the Euler polynomials} $E_n(X)$ ($n \in \N$) can be defined by their exponential generating function:
$$
\frac{2 e^{X t}}{e^t + 1} = \sum_{n = 0}^{+ \infty} E_n(X) \frac{t^n}{n!} .
$$
\textit{The Genocchi numbers} $G_n$ ($n \in \N$) can also be defined by their exponential generating function:
$$
\frac{2 t}{e^t + 1} = \sum_{n = 0}^{+ \infty} G_n \frac{t^n}{n!} .
$$
The famous Genocchi theorem \cite{gen} states that the $G_n$'s are all integers. The Bernoulli polynomials and numbers, the Euler polynomials and the Genocchi numbers have been studied by several authors (see e.g., \cite{com,far1,far2,nie}) and have many important and remarkable properties; among them we just cite the followings:
\begin{align}
B_n(X + 1) - B_n(X) = n X^{n - 1} ~~~~~~~~~~ (\forall n \in \N) , \label{eqJ} \\[1mm]
B_n\left(\frac{1}{2}\right) = \left(\frac{1}{2^{n - 1}} - 1\right) B_n ~~~~~~~~~~ (\forall n \in \N) , \label{eqJJ} \\[1mm]
G_n = 2 \left(1 - 2^n\right) B_n = n E_{n - 1}(0) ~~~~~~~~~~ (\forall n \in \N^*) . \label{eqJJJ}
\end{align}

In Arabic mathematics, there is a long tradition of determining closed forms for the sums of the type: $(1^k + 2^k + \dots + n^k)$ ($k \in \N^*$). Apart from the sum $(1 + 2 + \dots + n)$ for which the closed form has been known since antiquity (equal to $\frac{n (n + 1)}{2}$), we can cite
\begin{eqnarray*}
1^2 + 2^2 + \dots + n^2 & = & \frac{n (n + 1) (2 n + 1)}{6} ~~~~~~~~~~~~~~~~~~~~~~~~~~~~~~ \text{(al-Karaji)} , \\[2mm]
1^3 + 2^3 + \dots + n^3 & = & \frac{n^2 (n + 1)^2}{4} ~~~~~~~~~~~~~~~~~~~~~~~~~~~~~~~~~~~~~~ \text{(al-Karaji)} , \\[2mm]
1^4 + 2^4 + \dots + n^4 & = & \frac{n (n + 1) (2 n + 1) (3 n^2 + 3 n - 1)}{30} ~~~~~~~~~~ \text{(Ibn al-Haytham)} .
\end{eqnarray*}
In the course of the seventeenth century, it became increasingly evident that for any nonzero polynomial $P \in \R[X]$, the sum $P(0) + P(1) + \dots + P(n)$ ($n \in \N$) is polynomial in $n$ with degree $(\deg P + 1)$. Equivalently, for any $P \in \R[X]$, there exists $Q \in \R[X]$ satisfying $Q(X + 1) - Q(X) = P(X)$. More explicitly, Jacob Bernoulli \cite{ber} obtained the following remarkable formula:
$$
0^k + 1^k + \dots + n^k = \frac{1}{k + 1} \sum_{i = 0}^{k} \binom{k + 1}{i} B_i n^{k + 1 - i} + n^k ~~~~~~~~~~ (\forall k , n \in \N) . 
$$
For a given positive integer $d$ and a given polynomial $P \in \R[X]$, let us write:
\begin{equation}\label{eqs}
P(n) + P(n - d) + P(n - 2 d) + \dots ~~~~~~~~~~ (n \in \N) \tag{$\Sigma$}
\end{equation}
to designate the sum of the values of $P$ from $i = n$ to the smallest natural number of the form $(n - k d)$ ($k \in \N$), by decreasing $i$ in each step by $d$; that is the sum $\sum_{0 \leq k \leq \frac{n}{d}} P(n - k d)$. Although the sums \eqref{eqs} are polynomial in $n$ when $d = 1$ (as seen above), this is not always the case for $d \geq 2$. Indeed, we have for example (for $n \in \N$):
$$
n + (n - 2) + (n - 4) + \dots = \left\lfloor\frac{n + 1}{2}\right\rfloor \left(\left\lfloor\frac{n}{2}\right\rfloor + 1\right) ,
$$
which is not polynomial in $n$. In his pioneer book entitled ``Raf' al-\d{h}ijab 'an wujuh a'mal al-\d{h}isab'' (see e.g., \cite{aba}), meaning ``Lifting the veil from the faces of the workings of Arithmetic'', the great Arabic mathematician Ibn al-Banna al-Marrakushi (1256 - 1321) found a case when the sum \eqref{eqs} becomes polynomial in $n$ with $d \neq 1$. He precisely obtained the following curious formula:
\begin{equation}\label{eqbanna}
n^2 + (n - 2)^2 + (n - 4)^2 + \dots = \frac{n (n + 1) (n + 2)}{6} ~~~~~~~~ (\forall n \in \N) .
\end{equation}
More interestingly is the proof of Ibn al-Banna of his formula \eqref{eqbanna}. A natural way to prove \eqref{eqbanna} consists to distinguish two cases according to the parity of $n$. But Ibn al-Banna did not do this; he proved \eqref{eqbanna} in one step! To do so, he remarked that any perfect square number can be written as a sum of two consecutive triangular numbers; precisely, we have for all $n \in \N$:
$$
n^2 = T_n + T_{n - 1} ,
$$
where $T_k := \frac{k (k + 1)}{2}$ is the triangular number of order $k$. By admitting negative numbers (one thing that Ibn al-Banna himself avoids!\footnote{Although his predecessor al-Samawal (1130-1180) uses negative numbers with great skill.}), the Ibn al-Banna method for proving \eqref{eqbanna} becomes as follows:
\begin{framed}
\textit{\fontsize{10}{10}{%
\noindent For any $n \in \N$, we have
\begin{align*}
n^2 + (n - 2)^2 + (n - 4)^2 + \dots & = (T_n + T_{n - 1}) + (T_{n - 2} + T_{n - 3}) + (T_{n - 4} + T_{n - 5}) + \dots \\
& = T_n + T_{n - 1} + T_{n - 2} + \dots ,
\end{align*}
where the last sum stops at $T_{-1}$ when $n$ is even and stops at $T_0$ when $n$ is odd. But because $\boxed{T_{-1} = 0}$, we can stop the sum in question at $T_0$ in both cases. So we have
\begin{align*}
n^2 + (n - 2)^2 + (n - 4)^2 + \dots & = T_0 + T_1 + T_2 + \dots + T_n \\
& = \sum_{k = 0}^{n} \frac{k (k + 1)}{2} \\
& = \frac{n (n + 1) (n + 2)}{6} ,
\end{align*}
as required.
}}
\end{framed}
\noindent Looking closely at the previous Ibn al-Banna proof of Formula \eqref{eqbanna}, we see that the fundamental property that makes it work is $T_{-1} = 0$. So, by proceeding in the same way, we can establish other formulas of the same type as \eqref{eqbanna} and as curious as it is, with other values of $d$. I have obtained for example the following:
\begin{equation}\label{eqbakir}
n \left(n^2 + 1\right) + (n - 3) \left((n - 3)^2 + 1\right) + (n - 6) \left((n - 6)^2 + 1\right) + \dots = \frac{n (n + 1) (n + 2) (n + 3)}{12} .
\end{equation}
To prove \eqref{eqbakir}, let us observe that for any $n \in \N$, we have
$$
n (n^2 + 1) = \alpha_n + \alpha_{n - 1} + \alpha_{n - 2} ,
$$
where $\alpha_k := \frac{k (k + 1) (k + 2)}{3}$ ($\forall k \in \Z$). Thus we have
\begin{align*}
n \left(n^2 + 1\right) + (n - 3) \left((n - 3)^2 + 1\right) + (n - 6) \left((n - 6)^2 + 1\right) + \dots \\
& \hspace*{-5cm} = \left(\alpha_n + \alpha_{n - 1} + \alpha_{n - 2}\right) + \left(\alpha_{n - 3} + \alpha_{n - 4} + \alpha_{n - 5}\right) + \dots \\
& \hspace*{-5cm} = \alpha_n + \alpha_{n - 1} + \alpha_{n - 2} + \dots ,
\end{align*}
where the last sum stops at $\alpha_{-2}$ when $n \equiv \restmod{0}{3}$, stops at $\alpha_{-1}$ when $n \equiv \restmod{1}{3}$ and stops at $\alpha_0$ when $n \equiv \restmod{2}{3}$. But because $\boxed{\alpha_{-2} = \alpha_{-1} = 0}$, we can stop the sum in question at $\alpha_0$ in both cases. So we get
\begin{align*}
n \left(n^2 + 1\right) + (n - 3) \left((n - 3)^2 + 1\right) + (n - 6) \left((n - 6)^2 + 1\right) + \dots & = \alpha_{0} + \alpha_{1} + \dots + \alpha_n \\
& = \sum_{k = 0}^{n} \frac{k (k + 1) (k + 2)}{3} \\
& = \frac{n (n + 1) (n + 2) (n + 3)}{12} , 
\end{align*}
as required. Actually, we can even find a sum of type \eqref{eqs} with $d = 3$ which is polynomial in $n$ with degree only $2$. To do so, it suffices to take in the previous reasoning $\alpha_k = (k + 1) (k + 2)$ (instead of $\alpha_k = \frac{1}{3} k (k + 1) (k + 2)$). We then obtain that for $P_0(n) = 3 n^2 + 3 n + 2$ ($\forall n \in \N$), we have
$$
P_0(n) + P_0(n - 3) + P_0(n - 6) + \dots = \frac{(n + 1) (n + 2) (n + 3)}{3} ~~~~~~~~~~ (\forall n \in \N) .
$$
However, I personally find Formula \eqref{eqbakir} more elegant than the last! Actually, the mystery of Formula \eqref{eqbanna} can be explained otherwise by using the following immediate identity which holds for any $n \in \N$:
$$
n^2 + (n - 2)^2 + (n - 4)^2 + \dots = \frac{1}{2} \sum_{k = 0}^{n} (n - 2 k)^2 ;
$$
but unfortunately, this simple technique cannot be used to explain other identities of the same type as \eqref{eqbanna} (it is incapable for example to explain \eqref{eqbakir}).

Throughout this paper, for a given positive integer $d$, we let $\E_d$ denote the set of all polynomials $P \in \R[X]$ for which the sum \eqref{eqs} is polynomial in $n$. For example, we have (according to what is said above): $X \not\in \E_2 , X^2 \in \E_2 , X (X^2 + 1) \in \E_3$ and $3 X^2 + 3 X + 2 \in \E_3$. It is immediate that $\E_d$ ($d \in \N^*$) is a $\R$-linear subspace of $\R[X]$. This paper is devoted to studying the detailed structure of $\E_d$; in particular to determine for it an explicit basis and to specify its codimension in $\R[X]$. The case $d = 1$ is trivial because we know that the sum \eqref{eqs} is always polynomial in $n$ when $d = 1$; thus $\E_1 = \R[X]$. The case $d = 2$ is studied in more detail by providing for $\E_2$ a basis which is close (in a some sense) to the canonical basis of $\R[X]$; this will connect us with the sequence of the values of the Euler polynomials at the origin. Then, we go on to prove the generalizability of the Ibn al-Banna method (exposed above). Precisely, we will show that whenever the sum \eqref{eqs} is polynomial in $n$, the Ibn al-Banna method can be used to determine its closed form. We conclude the paper by giving (briefly) another approach (using convolution products and power series) to studying the spaces $\E_d$. This approach inspires us moreover a generalization of the spaces $\E_d$, and some extensive results are given without proofs. It should also be noted that during the description of the second approach of studying and generalizing the spaces $\E_d$, some other notations will be introduced. Especially, for more clarity and convenience, the sum \eqref{eqs} is denoted by $S_{P , d}(n)$, highlighting each of the parameters $P , d$ and $n$. 

\section{The results and the proofs}
Our main result is the following:
\begin{thm}\label{t1}
Let $d$ be a positive integer. Then, a polynomial $P \in \R[X]$ belongs to $\E_d$ if and only if it has the form:
\begin{equation}\label{eqI}
P(X) = \binom{X + d}{d} f(X + d) - \binom{X}{d} f(X) ,
\end{equation}
where $f \in \R[X]$. Besides, if \eqref{eqI} holds then we have for any natural number $n$:
$$
P(n - d) + P(n - 2 d) + P(n - 3 d) + \dots = \binom{n}{d} f(n) .
$$
\end{thm}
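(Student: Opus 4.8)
The plan is to reduce the entire statement to a single telescoping identity governed by an auxiliary polynomial $Q$ solving the first-order difference equation $Q(X + d) - Q(X) = P(X)$. First I would record that the linear operator $\Delta_d : Q \mapsto Q(X + d) - Q(X)$ lowers the degree of any nonconstant polynomial by exactly one (if $\deg Q = m \geq 1$ then the leading coefficient of $\Delta_d Q$ equals $d m$ times that of $Q$), so $\Delta_d$ maps $\R_m[X]$ onto $\R_{m - 1}[X]$ with kernel the constants. Hence for every $P \in \R[X]$ there exists a polynomial $Q$, unique up to an additive constant, with $\Delta_d Q = P$.

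The engine of the proof is the telescoping evaluation of the sum \eqref{eqs}. Writing $P(n - k d) = Q\big(n - (k - 1) d\big) - Q(n - k d)$ and summing over $0 \leq k \leq \lfloor n / d\rfloor$ collapses everything except the endpoints, giving
$$
\sum_{0 \leq k \leq \frac{n}{d}} P(n - k d) = Q(n + d) - Q(r) , \qquad r := n - d\left\lfloor\frac{n}{d}\right\rfloor \in \{0, 1, \dots, d - 1\} .
$$
Here $Q(n + d)$ is a genuine polynomial in $n$, while $r$ is the residue of $n$ modulo $d$, so the boundary term $n \mapsto Q(r)$ is a periodic function of $n$ with period $d$. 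This is the crux: the sum \eqref{eqs} is polynomial in $n$ if and only if its periodic part is constant, that is, if and only if $Q(0) = Q(1) = \dots = Q(d - 1)$, because a function on $\N$ that is simultaneously polynomial and periodic must be constant.

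It then remains to translate this condition into the factorization \eqref{eqI}. If $Q(0) = \dots = Q(d - 1) = c$, then $Q - c$ vanishes at $0, 1, \dots, d - 1$, which are exactly the roots of $\binom{X}{d}$; hence $\binom{X}{d}$ divides $Q - c$, and we may write $Q(X) = \binom{X}{d} f(X) + c$ for some $f \in \R[X]$. Substituting into $P = Q(X + d) - Q(X)$ makes $c$ cancel and yields precisely \eqref{eqI}. The converse is immediate: for $P$ of the form \eqref{eqI}, the choice $Q(X) = \binom{X}{d} f(X)$ satisfies $\Delta_d Q = P$ together with $Q(0) = \dots = Q(d - 1) = 0$, since $\binom{r}{d} = 0$ for $0 \leq r \leq d - 1$; thus $P \in \E_d$. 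Finally, running the same telescoping from the term $P(n - d)$ onward gives $P(n - d) + P(n - 2 d) + \dots = Q(n) - Q(r) = \binom{n}{d} f(n)$, again because $\binom{r}{d} = 0$, which is the asserted closed form.

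I expect the periodicity argument of the second paragraph to be the main obstacle, or rather the step most easily overlooked: the observation that the residue $r$ turns the boundary term $Q(r)$ into a periodic function of $n$, and the appeal to polynomial-plus-periodic forcing the periodic part to be constant, are exactly what pin down the divisibility condition $\binom{X}{d} \mid (Q - c)$. By contrast, the surjectivity of $\Delta_d$ and the vanishing $\binom{r}{d} = 0$ for $0 \leq r \leq d - 1$ are routine once explicitly stated.
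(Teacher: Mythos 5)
Your proof is correct, and it is organized around a different pivot than the paper's. The paper never constructs an antidifference for an arbitrary $P$: it starts from the hypothesis $P \in \E_d$, takes the sum polynomial $S$ itself, reads off $S(i) = P(i)$ for $i = 0, 1, \dots, d-1$ directly from the definition of the sum (at these $n$ the sum has a single term), factors $S - P = \binom{X}{d} f$, and then performs your ``shift by $d$ and subtract'' step on the resulting numerical identity. You instead solve $Q(X+d) - Q(X) = P(X)$ unconditionally (which requires the small but genuine lemma that $\Delta_d$ is surjective with kernel the constants), telescope to exhibit the sum as $Q(n+d) - Q(r_n)$ for \emph{every} $P$, and then invoke the fact that a function on $\N$ which is both polynomial and $d$-periodic is constant to pin down $Q(0) = \dots = Q(d-1)$. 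What your route buys is a cleaner conceptual picture: the sum \eqref{eqs} is always ``polynomial plus $d$-periodic,'' and membership in $\E_d$ is exactly the vanishing of the periodic obstruction; this intermediate characterization ($Q$ constant on $\{0,\dots,d-1\}$) is a reusable statement in its own right. What the paper's route buys is economy: it needs neither the surjectivity of $\Delta_d$ nor the periodicity lemma, only the observation that $S - P$ vanishes at $0, 1, \dots, d-1$. Both arguments converge on the same divisibility by $\binom{X}{d}$ and the same telescoping for the converse and for the closed form, and all the steps you flag as delicate (the boundary term $Q(r_n)$ being periodic, and polynomial-plus-periodic forcing constancy) are handled correctly.
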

\begin{proof}
Let $P \in \R[X]$. \\[1mm]
\textbullet{} Suppose that $P \in \E_d$; that is there exists $S \in \R[X]$ for which we have for any natural number $n$:
\begin{equation}\label{eq1}
P(n) + P(n - d) + P(n - 2 d) + \dots = S(n) .
\end{equation}
Using \eqref{eq1}, we have in particular:
$$
S(0) = P(0) ~,~ S(1) = P(1) ~,~ \dots ~,~ S(d - 1) = P(d - 1) .
$$
This shows that the real polynomial $(S(X) - P(X))$ vanishes at $0 , 1 , \dots , d - 1$. Consequently, there exists $f_0 \in \R[X]$ such that:
$$
S(X) - P(X) = X (X - 1) \cdots (X - d + 1) f_0(X) = \binom{X}{d} f(X) ,
$$
where $f := d! f_0 \in \R[X]$. By specializing the obtained polynomial equality $S(X) - P(X) = \binom{X}{d} f(X)$ to $X = n \in \N$, we get (according to \eqref{eq1}):
\begin{equation}\label{eq2}
P(n - d) + P(n - 2 d) + P(n - 3 d) + \dots = \binom{n}{d} f(n) ~~~~~~~~~~ (\forall n \in \N) .
\end{equation}
Next, by applying \eqref{eq2} for $(n + d)$ (instead of $n \in \N$), we get
\begin{equation}\label{eq3}
P(n) + P(n - d) + P(n - 2 d) + \dots = \binom{n + d}{d} f(n + d) ~~~~~~~~~~ (\forall n \in \N) .
\end{equation}
Then, by subtracting \eqref{eq2} from \eqref{eq3}, we obtain that:
$$
P(n) = \binom{n + d}{d} f(n + d) - \binom{n}{d} f(n) ~~~~~~~~~~ (\forall n \in \N) .
$$
Finally, since the two sides of the last formula are both polynomials in $n$ then its validity for any $n \in \N$ implies its validity as a polynomial identity; that is
$$
P(X) = \binom{X + d}{d} f(X + d) - \binom{X}{d} f(X) ,
$$ 
as required. \\[1mm]
\textbullet{} Conversely, suppose that $P$ has the form \eqref{eqI}. Then, for any $n , q \in \N$, we have
\begin{align*}
\sum_{k = 1}^{q} P(n - k d) & = \underbrace{\sum_{k = 1}^{q} \left\{\binom{n - (k - 1) d}{d} f(n - (k - 1) d) - \binom{n - k d}{d} f(n - k d)\right\}}_{\text{telescopic sum}} \\
& = \binom{n}{d} f(n) - \binom{n - q d}{d} f(n - q d) .
\end{align*}
By taking in particular $q = \lfloor\frac{n}{d}\rfloor$ and putting $r := n - q d$ (so $r$ is the remainder of the euclidean division of $n$ by $d$, and consequently it is the smallest natural number among the integers $n , n - d , n - 2 d , n - 3 d , \dots$), we find that:
$$
P(n - d) + P(n - 2 d) + P(n - 3 d) + \dots = \binom{n}{d} f(n) - \binom{r}{d} f(r) .
$$
But since $r \in \{0 , 1 , \dots , d - 1\}$, we have $\binom{r}{d} = 0$; hence
$$
P(n - d) + P(n - 2 d) + P(n - 3 d) + \dots = \binom{n}{d} f(n) ,
$$
showing that the sum $P(n - d) + P(n - 2 d) + P(n - 3 d) + \dots$ is polynomial in $n$; thus $P \in \E_d$, as required. This completes the proof of the theorem.
\end{proof}

From Theorem \ref{t1}, we derive the following corollary which determines an explicit basis for the $\R$-vector space $\E_d$ ($d \in \N^*$).
\begin{coll}\label{coll1}
Let $d$ be a positive integer. Then the family of real polynomials
$$
\mathscr{B} := {\left(\binom{X + d}{d} (X + d)^k - \binom{X}{d} X^k\right)}_{k \in \N}
$$
constitutes a basis for the $\R$-vector space $\E_d$.
\end{coll}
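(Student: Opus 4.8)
The plan is to recognize $\mathscr{B}$ as the image of the monomial basis under the very linear map that underlies Theorem~\ref{t1}. Concretely, I would introduce the $\R$-linear map $T \colon \R[X] \to \R[X]$ defined by
$$
T(f) := \binom{X + d}{d} f(X + d) - \binom{X}{d} f(X) .
$$
By Theorem~\ref{t1}, a polynomial $P$ lies in $\E_d$ precisely when $P = T(f)$ for some $f \in \R[X]$; that is, $\E_d = \mathrm{Im}\, T$. Since $(X^k)_{k \in \N}$ is a basis of $\R[X]$ and $T$ is linear, the family $\mathscr{B} = (T(X^k))_{k \in \N}$ spans $\mathrm{Im}\, T = \E_d$. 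So the spanning half of the statement is immediate, and everything reduces to showing that $\mathscr{B}$ is linearly independent, which (because the $X^k$ are independent) amounts to proving that $T$ is injective.

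For the injectivity, I would suppose $T(f) = 0$ for some $f \in \R[X]$, i.e.
$$
\binom{X + d}{d} f(X + d) = \binom{X}{d} f(X) .
$$
The key move is to set $g(X) := \binom{X}{d} f(X)$, so that $\binom{X + d}{d} f(X + d) = g(X + d)$ and the displayed equation becomes the periodicity relation $g(X + d) = g(X)$. A nonconstant polynomial cannot satisfy this: if $\deg g = m \geq 1$, then $g(X + d) - g(X)$ has degree $m - 1$ with nonzero leading coefficient, a contradiction; hence $g$ is constant. But $g = \binom{X}{d} f$ and $\binom{X}{d}$ has degree $d \geq 1$, so a constant $g$ forces $f = 0$. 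This proves $\ker T = \{0\}$.

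Combining the two parts, $\mathscr{B}$ is a spanning family of $\E_d$ that consists of the images, under an injective linear map, of the linearly independent family $(X^k)_{k \in \N}$; hence $\mathscr{B}$ is itself linearly independent, and therefore a basis of $\E_d$. The only delicate point is the injectivity argument, and its crux is the substitution $g = \binom{X}{d} f$, which converts the functional equation into the transparent periodicity condition $g(X + d) = g(X)$; the remainder is routine bookkeeping with degrees.
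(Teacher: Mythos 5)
Your proof is correct and follows essentially the same route as the paper: spanning via Theorem~\ref{t1} and linearity, and independence by reducing to the periodicity relation $g(X+d)=g(X)$ for $g=\binom{X}{d}f$. The only (cosmetic) difference is that you rule out a nonconstant $d$-periodic polynomial by a leading-coefficient computation, whereas the paper invokes the boundedness of continuous periodic functions; both are fine.
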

\begin{proof} ~\\
\textbullet{} First, let us show that $\mathscr{B}$ generates $\E_d$. For any given $P \in \E_d$, we can write (according to Theorem \ref{t1}):
$$
P(X) = \binom{X + d}{d} f(X + d) - \binom{X}{d} f(X) ,
$$
for some $f \in \R[X]$. Then, by expressing $f$ in the canonical basis of $\R[X]$; that is in the form:
$$
f(X) = \sum_{k = 0}^{n} a_k X^k
$$
(with $n \in \N$ and $a_0 , a_1 , \dots , a_n \in \R$), we obtain that:
\begin{align*}
P(X) & = \binom{X + d}{d} \sum_{k = 0}^{n} a_k (X + d)^k - \binom{X}{d} \sum_{k = 0}^{n} a_k X^k \\[1mm]
& = \sum_{k = 0}^{n} a_k \left[\binom{X + d}{d} (X + d)^k - \binom{X}{d} X^k\right] ,
\end{align*}
which is an expression of $P$ as a linear combination (with real coefficients) of elements of $\mathscr{B}$. Therefore, $\mathscr{B}$ generates $\E_d$. \\[1mm]
\textbullet{} Now, let us show that $\mathscr{B}$ is a free family in the $\R$-vector space $\R[X]$. Let $K$ be a finite nonempty subset of $\N$ and ${(\lambda_k)}_{k \in K}$ be a family of real numbers such that:
\begin{equation}\label{eq*}
\sum_{k \in K} \lambda_k \left(\binom{X + d}{d} (X + d)^k - \binom{X}{d} X^k\right) = 0 .
\end{equation}
So, we have to show that $\lambda_k = 0$ for every $k \in K$. By putting $Q(X) := \binom{X}{d} \sum_{k \in K} \lambda_k X^k \in \R[X]$, we have
$$
\eqref{eq*} \Longleftrightarrow Q(X + d) = Q(X) \Longleftrightarrow Q \text{ is $d$-periodic} . 
$$
But if $Q$ is not zero, we have that $\deg Q \geq \deg \binom{X}{d} = d \geq 1$; so $Q$ cannot be $d$-periodic (because a real continuous function on $\R$ which is $d$-periodic is inevitably bounded, while a real polynomial with degree $\geq 1$ is never bounded). Thus $Q$ is zero, implying that $\lambda_k = 0$ for every $k \in K$. Consequently, $\mathscr{B}$ is free. \\
In conclusion, $\mathscr{B}$ is a basis of the $\R$-vector space $\E_d$. The corollary is proved.
\end{proof}

\noindent\textbf{An important example (the case $\boldsymbol{d = 2}$):} \\
The particular case $d = 2$ is the one to which Ibn al-Banna's formula corresponds, and for this reason, it requires more attention. By applying Corollary \ref{coll1} for $d = 2$, we find that the $\R$-vector space $\E_2$ has as a basis the family of polynomials ${(e_k)}_{k \in \N}$, defined by:
$$
e_k(X) := \binom{X + 2}{2} (X + 2)^k - \binom{X}{2} X^k ~~~~~~~~~~ (\forall k \in \N) .
$$
The calculations give
\begin{align*}
e_0(X) & = 2 X + 1 , \\
e_1(X) & = 3 X^2 + 4 X + 2 , \\
e_2(X) & = 4 X^3 + 9 X^2 + 10 X + 4 , \text{ etc.}
\end{align*}
In particular, we observe that the polynomial $X^2$ (involved in Ibn al-Banna's formula) can be written as:
$$
X^2 = \frac{1}{3} \left(e_1 - 2 e_0\right) ,
$$
which is a linear combination of the $e_k$'s, so belongs to $\E_2$. Next, we observe that:
$$
X^3 - \frac{1}{4} = \frac{1}{4} \left(e_2 - 3 e_1 + e_0\right) ,
$$
showing that $\left(X^3 - \frac{1}{4}\right) \in \E_2$. More generally, we will show latter (see Corollary \ref{coll3} and Theorem \ref{t2}) that for all natural number $k$, we have
$$
\big(X^k - E_k(0)\big) \in \E_2 .
$$
In addition, the family of polynomials ${\big(X^k - E_k(0)\big)}_{k \in \N^*}$ constitutes a basis for the $\R$-vector space $\E_2$ (simpler than ${(e_k)}_{k \in \N}$). 

In what follows, we derive from Theorem \ref{t1} a ``simple'' complement subspace (in $\R[X]$) of the $\R$-vector space $\E_d$ ($d \in \N^*$). We have the following corollary:
\begin{coll}\label{coll2}
Let $d$ be a positive integer. Then the $\R$-vector space $\R_{d - 2}[X]$ is a complement subspace {\rm(}in $\R[X]${\rm)} of $\E_d$; that is
$$
\E_d \oplus R_{d - 2}[X] = \R[X] .
$$
In particular, we have
$$
\codim_{\R[X]} \E_d = d - 1 .
$$
\end{coll}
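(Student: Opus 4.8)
The plan is to lean on Theorem~\ref{t1} and Corollary~\ref{coll1} rather than argue from scratch, reducing the whole statement to a degree count. The decisive structural observation is that every $P \in \E_d$ is a finite difference: if $P(X) = \binom{X+d}{d} f(X+d) - \binom{X}{d} f(X)$, then setting $g(X) := \binom{X}{d} f(X)$ gives $P(X) = g(X+d) - g(X)$. I would therefore begin by computing degrees precisely. Since $\binom{X+d}{d}$ and $\binom{X}{d}$ share the same degree $d$ and the same leading coefficient $\tfrac{1}{d!}$, the top-degree terms of $g(X+d)$ and $g(X)$ coincide and cancel in the difference. Hence, for $f \neq 0$ (so that $\deg g = d + \deg f \geq d \geq 1$ and $g$ is non-constant), the difference $g(X+d) - g(X)$ is nonzero of degree exactly $d + \deg f - 1$. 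In particular, every nonzero $P \in \E_d$ satisfies $\deg P \geq d - 1$.

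From this the intersection is immediate. Since $\R_{d-2}[X]$ consists of polynomials of degree $\leq d-2$, no nonzero element of $\E_d$ can lie in it, so $\E_d \cap \R_{d-2}[X] = \{0\}$.

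For the sum I would use the basis $\mathscr{B} = (\mathscr{B}_k)_{k \in \N}$ of Corollary~\ref{coll1}, where $\mathscr{B}_k$ is the member corresponding to $f(X) = X^k$. By the degree count above, $\deg \mathscr{B}_k = d + k - 1$, so the elements of $\mathscr{B}$ realize precisely the degrees $d-1, d, d+1, \dots$. Adjoining the monomials $1, X, \dots, X^{d-2}$, which form a basis of $\R_{d-2}[X]$ and realize the degrees $0, 1, \dots, d-2$, I obtain a family containing exactly one polynomial of each degree $0, 1, 2, \dots$; such a family (one polynomial per degree, hence with pairwise distinct degrees) is automatically a basis of $\R[X]$. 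Therefore $\E_d + \R_{d-2}[X] = \R[X]$, and combined with the trivial intersection this yields $\E_d \oplus \R_{d-2}[X] = \R[X]$. The codimension statement then follows at once from $\codim_{\R[X]} \E_d = \dim \R_{d-2}[X] = d-1$.

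The main obstacle is the exact degree computation in the first step: one must establish not merely that the degree drops, but that it drops by \emph{exactly} one. The risk is either to miss the cancellation of leading terms (which would wrongly suggest $\deg P = d + \deg f$) or to overlook whether the next coefficient also vanishes. The cleanest safeguard is to recognize $g(X+d) - g(X)$ as the first finite difference (with step $d \neq 0$) of a polynomial of degree $M := d + \deg f$, and to invoke the standard fact that such a difference has degree exactly $M-1$ whenever $M \geq 1$. This simultaneously guarantees non-vanishing and the precise degree, making both the intersection argument and the degree bookkeeping in the sum step rigorous.
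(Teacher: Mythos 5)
Your proof is correct, but the second half takes a genuinely different route from the paper's. For the intersection $\E_d \cap \R_{d-2}[X] = \{0\}$ you argue exactly as the paper does, via the degree count $\deg P = d + \deg f - 1 \geq d-1$ for nonzero $P \in \E_d$; your extra care in justifying that the degree drops by \emph{exactly} one (viewing $P$ as the step-$d$ finite difference of $g(X) = \binom{X}{d}f(X)$, whose leading term $a_M M d\, X^{M-1}$ is visibly nonzero) is a welcome tightening of a step the paper states without comment. For the sum $\E_d + \R_{d-2}[X] = \R[X]$, however, you use an echelon-of-degrees argument: the basis elements $\mathscr{B}_k$ of Corollary \ref{coll1} have degrees $d-1, d, d+1, \dots$, so adjoining $1, X, \dots, X^{d-2}$ produces one polynomial of each degree and hence a basis of $\R[X]$. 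This is shorter and more elementary than the paper's argument, which instead \emph{constructs} the decomposition $P = Q + R$ explicitly: it takes a discrete antiderivative $P^*$ of $P(dX)$, performs the Euclidean division $P^*(X) = \binom{dX}{d}q(X) + r(X)$, and reads off $Q \in \E_d$ and $R \in \R_{d-2}[X]$ from $q$ and $r$. What the paper's longer route buys is precisely that explicit construction: it is reused verbatim in the proof of Theorem \ref{t2}, where taking $P(X) = X^n$ and $P^* = \frac{2^n}{n+1}B_{n+1}(X)$ yields the closed form $c_n = E_n(0) = \frac{G_{n+1}}{n+1}$. Your argument establishes the direct-sum statement and the codimension just as rigorously, but would not by itself supply the machinery the paper needs downstream.
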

\begin{proof}
We have to show that $\E_d \cap \R_{d - 2}[X] = \{0_{\R[X]}\}$ and that $\E_d + \R_{d - 2}[X] = \R[X]$. \\[1mm]
\textbullet{} Let us first show that $\E_d \cap \R_{d - 2}[X] = \{0_{\R[X]}\}$. So, let $P \in \E_d \cap \R_{d - 2}[X]$ and let us show that $P$ is necessarily the zero polynomial (the second inclusion $\{0_{\R[X]}\} \subset \E_d \cap \R_{d - 2}[X]$ is trivial). From $P \in \E_d$, we deduce (by Theorem \ref{t1}) that there is $f \in \R[X]$ such that:
$$
P(X) = \binom{X + d}{d} f(X + d) - \binom{X}{d} f(X) .
$$ 
If we suppose $P \neq 0_{\R[X]}$, we would have $f \neq 0_{\R[X]}$ and
$$
\deg P = \deg\left(\binom{X}{d} f(X)\right) - 1 = d + \deg f - 1 \geq d - 1 ,
$$
which contradicts the fact that $P \in \R_{d - 2}[X]$. Hence $P = 0_{\R[X]}$, as required. This confirms that $\E_d \cap \R_{d - 2}[X] = \{0_{\R[X]}\}$. \\
\textbullet{} Now, let us show that $\E_d + \R_{d - 2}[X] = \R[X]$. So, let $P \in \R[X]$ and let us show the existence of two polynomials $Q \in \E_d$ and $R \in \R_{d - 2}[X]$ such that $P = Q + R$ (the other inclusion $\E_d + \R_{d - 2}[X] \subset \R[X]$ is trivial). To do so, let us consider $P^* \in \R[X]$ such that:
$$
P^*(X + 1) - P^*(X) = P(d X)
$$
(such $P^*$ exists because $P(d X) \in \R[X]$). Let us then consider the euclidean division (in $\R[X]$) of $P^*$ by the polynomial $\binom{d X}{d}$ (which is of degree $d$):
\begin{equation}\label{eq4}
P^*(X) = \binom{d X}{d} q(X) + r(X) ,
\end{equation}
where $q , r \in \R[X]$ and $\deg r \leq d - 1$. Using \eqref{eq4}, we have that
$$
P^*(X + 1) - P^*(X) = \binom{d X + d}{d} q(X + 1) - \binom{d X}{d} q(X) + r(X + 1) - r(X) ;
$$
that is
$$
P(d X) = \binom{d X + d}{d} q(X + 1) - \binom{d X}{d} q(X) + r(X + 1) - r(X) .
$$
By substituting in this last equality $X$ by $\frac{X}{d}$, we get
$$
P(X) = \binom{X + d}{d} q\left(\frac{X + d}{d}\right) - \binom{X}{d} q\left(\frac{X}{d}\right) + r\left(\frac{X}{d} + 1\right) - r\left(\frac{X}{d}\right) .
$$
So, it suffices to take
\begin{align*}
Q(X) & := \binom{X + d}{d} q\left(\frac{X + d}{d}\right) - \binom{X}{d} q\left(\frac{X}{d}\right) \\[-5mm]
\intertext{and} \\[-1.5cm]
R(X) & := r\left(\frac{X}{d} + 1\right) - r\left(\frac{X}{d}\right)
\end{align*}
to have $P = Q + R$, with $Q \in \E_d$ (according to Theorem \ref{t1}) and $R \in \R_{d - 2}[X]$ (since $\deg R = \deg r - 1 \leq d - 2$). Consequently, we have $\R[X] = \E_d + \R_{d - 2}[X]$, as required. This completes the proof of the corollary.
\end{proof}

From Corollary \ref{coll2}, we derive the following important corollary relating to the particular case $d = 2$.

\begin{coll}\label{coll3}
There is a unique real sequence ${(c_n)}_{n \in \N}$ such that:
$$
\left(X^n - c_n\right) \in \E_2 ~~~~~~~~~~ (\forall n \in \N) .
$$
Besides, we have $c_{2 n} = 0$ for any positive integer $n$.
\end{coll}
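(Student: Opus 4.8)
The plan is to get existence and uniqueness essentially for free from Corollary~\ref{coll2}, and then to reduce the vanishing statement $c_{2n}=0$ to the single membership $X^{2n}\in\E_2$, which I would settle by explicitly solving a difference equation using Bernoulli polynomials. For existence and uniqueness, I would apply Corollary~\ref{coll2} in the case $d=2$, which gives the direct sum $\R[X]=\E_2\oplus\R_0[X]$, where $\R_0[X]$ is the line of constants. Decomposing each monomial along this splitting yields a unique expression $X^n=Q_n+c_n$ with $Q_n\in\E_2$ and $c_n\in\R_0[X]$ a constant, so that $X^n-c_n=Q_n\in\E_2$. Uniqueness is then immediate: if $X^n-c$ and $X^n-c'$ both lie in $\E_2$ with $c,c'$ constants, their difference $c'-c$ lies in $\E_2\cap\R_0[X]=\{0\}$, whence $c=c'$.

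For the second assertion I observe that $c_{2n}=0$ is equivalent to $X^{2n}\in\E_2$, and I would first recast the criterion of Theorem~\ref{t1}. Writing $g(X):=\binom{X}{2}f(X)=\tfrac{X(X-1)}{2}f(X)$, a polynomial $P$ lies in $\E_2$ if and only if $P(X)=g(X+2)-g(X)$ for some $g\in\R[X]$ with $g(0)=g(1)=0$, since as $f$ runs over $\R[X]$ the polynomial $g$ runs exactly over the multiples of $X(X-1)$. Thus it suffices to exhibit a polynomial $g$ with $g(X+2)-g(X)=X^{2n}$ and with the two boundary conditions $g(0)=g(1)=0$.

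To construct such a $g$ I would exploit a half-argument scaling that converts the step $2$ into the step $1$, so that the Bernoulli identity \eqref{eqJ} applies. Concretely I propose $g_0(X):=\tfrac{2^{2n}}{2n+1}\,B_{2n+1}\!\big(\tfrac{X}{2}\big)$; then \eqref{eqJ}, used with argument $\tfrac{X}{2}$, gives $g_0(X+2)-g_0(X)=\tfrac{2^{2n}}{2n+1}\big[B_{2n+1}(\tfrac{X}{2}+1)-B_{2n+1}(\tfrac{X}{2})\big]=X^{2n}$, which is the required difference equation.

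The crux, and the step I expect to be the genuine obstacle, is verifying the two boundary conditions for this particular $g_0$, and this is precisely where the arithmetic of Bernoulli numbers intervenes. One has $g_0(0)=\tfrac{2^{2n}}{2n+1}B_{2n+1}$ and, by \eqref{eqJJ}, $g_0(1)=\tfrac{2^{2n}}{2n+1}B_{2n+1}(\tfrac12)=\tfrac{2^{2n}}{2n+1}(2^{-2n}-1)B_{2n+1}$. Since the odd-index Bernoulli number $B_{2n+1}$ vanishes for every $n\geq 1$, both expressions equal $0$, so $g_0(0)=g_0(1)=0$; by the reformulated criterion this gives $X^{2n}\in\E_2$, i.e.\ $c_{2n}=0$, for all $n\geq 1$. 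I would close by remarking that the same computation at $n=0$ yields $g_0(0)=B_1\big(0\big)=-\tfrac12\neq 0$, correctly reflecting that $1\notin\E_2$ and $c_0\neq 0$, which shows that the hypothesis $n\geq 1$ enters exactly through the vanishing of $B_{2n+1}$.
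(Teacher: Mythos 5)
Your proof of existence and uniqueness, via the splitting $\R[X]=\E_2\oplus\R_0[X]$ supplied by Corollary \ref{coll2}, is exactly the paper's argument. For the vanishing $c_{2n}=0$ you take a genuinely different and correct route. The paper disposes of it in one line with the symmetry identity
$$
N^{2n}+(N-2)^{2n}+(N-4)^{2n}+\dots=\frac{1}{2}\sum_{k=0}^{N}(N-2k)^{2n} \qquad (n\geq 1),
$$
whose right-hand side is polynomial in $N$ by Faulhaber after a binomial expansion, so that $X^{2n}\in\E_2$ with no further input. You instead recast Theorem \ref{t1} for $d=2$ as the existence of $g$ with $g(X+2)-g(X)=X^{2n}$ and $g(0)=g(1)=0$ (a correct reformulation: $g=\binom{X}{2}f$ ranges exactly over the multiples of $X(X-1)$), exhibit $g_0(X)=\frac{2^{2n}}{2n+1}B_{2n+1}(X/2)$ using \eqref{eqJ}, and verify the boundary conditions from \eqref{eqJJ} together with the vanishing of the odd Bernoulli numbers $B_{2n+1}$ for $n\geq 1$ --- a standard fact, though not among the identities the paper lists. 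Your computation is in substance the $P(X)=X^n$, $d=2$ specialization that the paper performs later in the proof of Theorem \ref{t2}, where the same polynomial $\frac{2^n}{n+1}B_{n+1}(X)$ and the same two identities produce the closed form $c_n=\frac{2(1-2^{n+1})}{n+1}B_{n+1}=E_n(0)$; the paper's subsequent Remark notes explicitly that this yields a second proof of $c_{2n}=0$. So the paper's argument buys elementarity and independence from any Bernoulli-number arithmetic, while yours buys an argument that already contains the exact value of every $c_n$, at the price of invoking $B_{2n+1}=0$. One cosmetic caveat: the failure of your particular $g_0$ at $n=0$ is consistent with, but does not by itself establish, $1\notin\E_2$; since you do not claim otherwise, this is not a gap.
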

\begin{proof}
By Corollary \ref{coll2}, we have
$$
\R[X] = \E_2 \oplus \R_0[X] .
$$
Since $\R_0[X]$ is constituted by constant polynomials, this shows in particular that for any $n \in \N$, the monomial $X^n$ can be written in a unique way as $X^n = P_n(X) + c_n$, with $P_n \in \E_2$ and $c_n \in \R$. In other words, for any $n \in \N$, there is a unique $c_n \in \R$ such that $(X^n - c_n) \in \E_2$. This confirms the first part of the corollary. Next, for any $n \in \N^*$, the immediate formula:
$$
N^{2 n} + (N - 2)^{2 n} + (N - 4)^{2 n} + \dots = \frac{1}{2} \sum_{k = 0}^{N} (N - 2 k)^{2 n} ~~~~~~~~~~ (\forall N \in \N)
$$
insures that $X^{2 n} \in \E_2$; hence $c_{2 n} = 0$. This confirms the second part of the corollary and achieves the proof.
\end{proof}

For the sequel, we let ${(c_n)}_{n \in \N}$ denote the real sequence established by Corollary \ref{coll3}. The following theorem provides explicit expressions of the $c_n$'s ($n \in \N$), using the Euler polynomials and the Genocchi numbers.

\begin{thm}\label{t2}
For every natural number $n$, we have
$$
c_n = E_n(0) = \frac{G_{n + 1}}{n + 1} .
$$
\end{thm}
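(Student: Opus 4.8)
The plan is to turn the defining property of $c_n$ into an explicit difference equation and then solve it in closed form using Bernoulli polynomials, reading off $c_n$ by means of the classical identities \eqref{eqJ}, \eqref{eqJJ} and \eqref{eqJJJ}. The second equality $E_n(0) = \frac{G_{n+1}}{n+1}$ is in fact immediate from \eqref{eqJJJ} (apply it at index $n+1$), so the real content is the identity $c_n = \frac{G_{n+1}}{n+1}$.

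First I would record a convenient reformulation of membership in $\E_2$. By Theorem \ref{t1} with $d = 2$, a polynomial $P$ lies in $\E_2$ exactly when $P(X) = g(X+2) - g(X)$ for some $g \in \R[X]$ of the form $g = \binom{X}{2} f$; equivalently, for some $g \in \R[X]$ with $g(0) = g(1) = 0$ (since $\binom{0}{2} = \binom{1}{2} = 0$, and conversely any polynomial vanishing at $0$ and $1$ is divisible by $X(X-1) = 2\binom{X}{2}$). Hence, by the definition of $c_n$ from Corollary \ref{coll3}, the number $c_n$ is characterized as the unique real for which the step-$2$ difference equation
$$ g(X+2) - g(X) = X^n - c_n $$
admits a solution $g \in \R[X]$ with $g(0) = g(1) = 0$.

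Next I would solve this equation explicitly. Using \eqref{eqJ} in the form $B_{n+1}(Y+1) - B_{n+1}(Y) = (n+1) Y^n$ with $Y = \frac{X}{2}$, one checks that $\frac{2^n}{n+1} B_{n+1}(\frac{X}{2})$ is a particular solution of $g(X+2) - g(X) = X^n$, while $\frac{X}{2}$ is a particular solution of $g(X+2) - g(X) = 1$; since the kernel of the operator $g \mapsto g(X+2) - g(X)$ on $\R[X]$ consists of the constants, the general solution is
$$ g(X) = \frac{2^n}{n+1}\, B_{n+1}\!\left(\tfrac{X}{2}\right) - \frac{c_n}{2}\, X + \kappa \qquad (\kappa \in \R) . $$
Imposing $g(0) = 0$ fixes $\kappa = -\frac{2^n}{n+1} B_{n+1}$, and then $g(1) = 0$ becomes a single linear equation in $c_n$, giving $c_n = \frac{2^{n+1}}{n+1}\big(B_{n+1}(\tfrac12) - B_{n+1}\big)$.

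Finally I would simplify this expression with the stated properties of the Bernoulli numbers. Applying \eqref{eqJJ} at index $n+1$ yields $B_{n+1}(\tfrac12) = (2^{-n} - 1) B_{n+1}$, whence $c_n = \frac{2(1 - 2^{n+1})}{n+1}\, B_{n+1}$. Comparing with \eqref{eqJJJ}, which gives $G_{n+1} = 2(1 - 2^{n+1}) B_{n+1} = (n+1) E_n(0)$, I would conclude $c_n = \frac{G_{n+1}}{n+1} = E_n(0)$, as desired. The only genuinely creative step is the first one, namely recognizing that the condition $(X^n - c_n) \in \E_2$ is equivalent to solvability of a step-$2$ difference equation with boundary conditions at $0$ and $1$; once a Bernoulli-type antidifference is written down, everything reduces to the known evaluation of $B_{n+1}(1/2)$ and the Bernoulli--Genocchi--Euler relations already quoted in the introduction. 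A small point worth keeping in mind is the uniqueness of $c_n$ (already guaranteed by Corollary \ref{coll3}), which is what makes the two boundary conditions match exactly the two free parameters $\kappa$ and $c_n$.
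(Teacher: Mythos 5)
Your proof is correct and follows essentially the same route as the paper: both produce the Bernoulli antidifference $\frac{2^n}{n+1}B_{n+1}$ via \eqref{eqJ}, pin down $c_n$ by evaluating at the two roots of $\binom{X}{2}$ (your boundary conditions $g(0)=g(1)=0$ play exactly the role of the paper's Euclidean division of $P^*$ by $X(2X-1)$, with your evaluation points $0,1$ corresponding to the paper's $0,\tfrac12$ under $X\mapsto X/2$), and conclude with \eqref{eqJJ} and \eqref{eqJJJ}. The only difference is organizational: you solve the step-$2$ difference equation directly instead of re-invoking the construction from the proof of Corollary \ref{coll2}.
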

\begin{proof}
Let $n$ be a fixed natural number. We exactly follow the second part of the proof of Corollary \ref{coll2} (i.e., the proof of the fact that $\R[X] = \E_d + \R_{d - 2}[X]$) by taking $d = 2$ and $P(X) = X^n$. We thus need a polynomial $P^* \in \R[X]$ such that:
$$
P^*(X + 1) - P^*(X) = P(2 X) = (2 X)^n = 2^n X^n .
$$
According to Formula \eqref{eqJ}, we can take
$$
P^*(X) = \frac{2^n}{n + 1} B_{n + 1}(X) .
$$
Then, consider the euclidean division (in $\R[X]$) of this chosen polynomial $P^*(X)$ by the polynomial $\binom{2 X}{2} = X (2 X - 1)$:
$$
P^*(X) = X (2 X - 1) q(X) + r(X) ,
$$
where $q , r \in \R[X]$ and $\deg r \leq 1$. Expressing $r(X)$ as $r(X) = a X + b$ ($a , b \in \R$), we get the polynomial identity:
\begin{equation}\label{eq5}
\frac{2^n}{n + 1} B_{n + 1}(X) = X (2 X - 1) q(X) + a X + b .
\end{equation}
According to the second part of the proof of Corollary \ref{coll2}, we have
$$
c_n = R(X) := r\left(\frac{X}{2} + 1\right) - r\left(\frac{X}{2}\right) = a \left(\frac{X}{2} + 1\right) + b - \left(a \frac{X}{2} + b\right) = a .
$$
So, we have to show that $a = E_n(0) = \frac{G_{n + 1}}{n + 1}$. By taking in \eqref{eq5} successively $X = 0$ and $X = \frac{1}{2}$, we obtain that:
\begin{align*}
b & = \frac{2^n}{n + 1} B_{n + 1}(0) = \frac{2^n}{n + 1} B_{n + 1} , \\[-5mm]
\intertext{and} \\[-13mm]
\frac{a}{2} + b & = \frac{2^n}{n + 1} B_{n + 1}\left(\frac{1}{2}\right) \\[1mm]
& = \frac{2^n}{n + 1} \left(\frac{1}{2^n} - 1\right) B_{n + 1} ~~~~~~~~~~ (\text{according to \eqref{eqJJ}}) \\[1mm]
& = \frac{1 - 2^n}{n + 1} B_{n + 1} .
\end{align*} 
Thus
\begin{align*}
a & = 2 \left[\left(\frac{a}{2} + b\right) - b\right] \\[1mm]
& = 2 \left[\frac{1 - 2^n}{n + 1} B_{n + 1} - \frac{2^n}{n + 1} B_{n + 1}\right] \\[1mm]
& = \frac{2 \left(1 - 2^{n + 1}\right)}{n + 1} B_{n + 1} \\[1mm]
& = E_n(0) = \frac{G_{n + 1}}{n + 1} ~~~~~~~~~~ (\text{according to \eqref{eqJJJ}}) ,
\end{align*}
as required. This completes the proof.
\end{proof}

\begin{rmks}~\\[-1cm]
\begin{enumerate}
\item Since, as known, $G_n = 0$ for any odd integer $n \geq 3$, we deduce from Theorem \ref{t2} the fact that $c_n = 0$ for any even integer $n \geq 2$, which is already announced by Corollary \ref{coll3} (but proved in a different way).
\item Using the formula $c_n = \frac{G_{n + 1}}{n + 1}$ (of Theorem \ref{t2}) together with the fact that each Genocchi number $G_n$ ($n \in \N^*$) is a multiple of the odd part of $n$ (see e.g., \cite[Corollary 2.2]{far2}), we easily show that the $c_n$'s are all rational numbers with denominators powers of $2$. Precisely, for any $n \in \N$, we can write $c_n = \frac{a}{2^e}$, with $a \in \Z$, $e \in \N$ and $e \leq \vartheta_2(n + 1)$. 
\end{enumerate}
\end{rmks}

The following table provides the values of the first numbers $c_n$ ($n = 0 , 1 , \dots, 15$):

\bigskip

\begin{center}
\begin{tabular}{c|c|c|c|c|c|c|c|c|c|c|c|c|c|c|c|c|}
$n$ & $0$ & $1$ & $2$ & $3$ & $4$ & $5$ & $6$ & $7$ & $8$ & $9$ & $10$ & $11$ & $12$ & $13$ & $14$ & $15$ \\
\hline
$c_n$ & $1$ & $- \frac{1}{2}$ & $0$ & $\frac{1}{4}$ & $0$ & $- \frac{1}{2}$ & $0$ & $\frac{17}{8}$ & $0$ & $- \frac{31}{2}$ & $0$ & $\frac{691}{4}$ & $0$ & $- \frac{5461}{2}$ & $0$ & $\frac{929569}{16}$
\end{tabular}
\end{center}

\bigskip

Now, we will derive from our main theorem \ref{t1} that the Ibn al-Banna method (exposed in §\ref{sec1}) for expressing in a closed form a sum of the type $P(n) + P(n - d) + P(n - 2 d) + \dots$, when $d \geq 2$ is an integer and $P \in \E_d$, is in fact general. Precisely, we will show that in a such context, it is always possible to express $P$ in the form $P(X) = \alpha(X) + \alpha(X - 1) + \dots + \alpha(X - d + 1)$, where $\alpha \in \R[X]$ and $\alpha(-1) = \alpha(-2) = \dots = \alpha(- d + 1) = 0$.

\begin{thm}[Generalizability of the Ibn al-Banna method]
Let $d \geq 2$ be an integer. Then a real polynomial $P$ belongs to $\E_d$ if and only if there exists $\alpha \in \R[X]$, satisfying $\alpha(-1) = \alpha(-2) = \dots = \alpha(- d + 1) = 0$, such that:
$$
P(X) = \alpha(X) + \alpha(X - 1) + \dots + \alpha(X - d + 1) .
$$
\end{thm}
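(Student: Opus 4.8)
The plan is to deduce both directions from Theorem~\ref{t1}, using a single telescoping identity as the bridge between the representation $P(X) = \binom{X+d}{d} f(X+d) - \binom{X}{d} f(X)$ furnished there and the sought ``sum of $d$ consecutive shifts'' form. The only genuinely new ingredient is the choice of $\alpha$.

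For the forward implication, I would take $P \in \E_d$ and fix, by Theorem~\ref{t1}, a polynomial $f \in \R[X]$ with $P(X) = \binom{X+d}{d} f(X+d) - \binom{X}{d} f(X)$. Abbreviating $G(X) := \binom{X+d}{d} f(X+d)$, the substitution $X \mapsto X-d$ shows the second term is exactly $G(X-d)$, so that $P(X) = G(X) - G(X-d)$. I would then \emph{define} $\alpha(X) := G(X) - G(X-1) \in \R[X]$; the telescoping $\sum_{j=0}^{d-1} \alpha(X-j) = G(X) - G(X-d) = P(X)$ gives the required representation at once. To verify the boundary conditions, note that for $1 \le m \le d-1$ one has $\alpha(-m) = G(-m) - G(-m-1)$ with $G(-m) = \binom{d-m}{d} f(d-m)$ and $G(-m-1) = \binom{d-m-1}{d} f(d-m-1)$; since both $d-m$ and $d-m-1$ lie in $\{0, 1, \dots, d-1\}$, the two binomial coefficients $\binom{\cdot}{d}$ vanish, whence $\alpha(-1) = \dots = \alpha(-d+1) = 0$.

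For the converse, I would suppose $P(X) = \sum_{j=0}^{d-1} \alpha(X-j)$ with $\alpha(-1) = \dots = \alpha(-d+1) = 0$, and compute the sum $P(n) + P(n-d) + P(n-2d) + \dots$ directly. Writing $n = d \lfloor n/d \rfloor + r$ with $0 \le r \le d-1$ and substituting the shift-sum form of $P$, the double sum over the step index $k$ and the shift $j$ reindexes into one uninterrupted run of consecutive arguments $\alpha(n) + \alpha(n-1) + \dots$, terminating at $\alpha(r-d+1) = \alpha(-(d-1-r))$. The tail carrying the negative arguments, namely $\alpha(-1), \alpha(-2), \dots, \alpha(-(d-1-r))$, is annihilated by the hypothesis on $\alpha$, so the sum collapses to $\sum_{m=0}^{n} \alpha(m)$. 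As the summation of a polynomial over $0 \le m \le n$ is again a polynomial in $n$ (the classical case $\E_1 = \R[X]$ recalled in Section~\ref{sec1}), this proves $P \in \E_d$.

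The two telescoping/reindexing computations are routine. The single point that needs care is the index bookkeeping in the converse: one must check that the blocks $\{kd, kd+1, \dots, kd+d-1\}$ tile the integers from $0$ up to $d\lfloor n/d\rfloor + d - 1$ without gaps, so that the lowest argument reached is exactly $r - d + 1$ and every negative argument produced falls in $\{-1, \dots, -(d-1)\}$, where $\alpha$ is prescribed to vanish. This is the abstract counterpart of the ``$T_{-1} = 0$'' and ``$\alpha_{-2} = \alpha_{-1} = 0$'' cancellations that drive Ibn al-Banna's method in the Introduction, and making it airtight is the main (though modest) obstacle.
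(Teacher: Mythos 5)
Your proposal is correct and follows essentially the same route as the paper: your $\alpha(X) = G(X) - G(X-1)$ is precisely the paper's choice $\alpha(X) = \binom{X+d}{d} f(X+d) - \binom{X+d-1}{d} f(X+d-1)$, the forward direction uses the same telescoping over $d$ consecutive shifts, and the converse uses the same block-tiling reindexation with the vanishing of $\alpha$ at $-1, \dots, -(d-1)$ killing the negative tail. Your explicit verification of the boundary conditions via the vanishing of $\binom{j}{d}$ for $0 \le j \le d-1$ is in fact slightly more detailed than the paper's ``clearly''.
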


\begin{proof}
Let $P \in \R[X]$ be fixed. \\[1mm]
\textbullet{} Suppose that $P \in \E_d$ and let us show that $P$ is of the form required by the theorem. According to Theorem \ref{t1}, there exists $f \in \R[X]$ such that:
$$
P(X) = \binom{X + d}{d} f(X + d) - \binom{X}{d} f(X) .
$$
Now, consider $\alpha \in \R[X]$ defined by:
$$
\alpha(X) := \binom{X + d}{d} f(X + d) - \binom{X + d - 1}{d} f(X + d - 1) .
$$
So, we have clearly
$$
\alpha(-1) = \alpha(-2) = \dots = \alpha(- d + 1) = 0 .
$$
Next, we have
\begin{align*}
\alpha(X) + \alpha(X - 1) + \dots + \alpha(X - d + 1) & = \sum_{k = 0}^{d - 1} \alpha(X - k) \\
& \hspace*{-4cm} = \underbrace{\sum_{k = 0}^{d - 1} \left\{\binom{X - k + d}{d} f(X - k + d) - \binom{X - k + d - 1}{d} f(X - k + d - 1)\right\}}_{\text{telescopic sum}} \\
& \hspace*{-4cm} = \binom{X + d}{d} f(X + d) - \binom{X}{d} f(X) \\
& \hspace*{-4cm} = P(X) .
\end{align*}
Therefore $P$ has the form required by the theorem. \\[1mm]
\textbullet{} Conversely, suppose that there is $\alpha \in \R[X]$, satisfying $\alpha(-1) = \alpha(-2) = \dots = \alpha(- d + 1) = 0$, such that:
$$
P(X) = \alpha(X) + \alpha(X - 1) + \dots + \alpha(X - d + 1)
$$
and let us show that $P \in \E_d$. For a given $n \in \N$, denoting by $r_n$ the remainder of the euclidean division of $n$ by $d$ (so $r_n \in \{0 , 1 , \dots , d - 1\}$), we have
\begin{align*}
P(n) + P(n - d) + P(n - 2 d) + \dots & = P(n) + P(n - d) + P(n - 2 d) + \dots + P(r_n) \\
& = \phantom{+} \alpha(n) + \alpha(n - 1) + \dots + \alpha(n - d + 1) \\[-3mm]
& \phantom{=} ~+ \\[-3mm]
& \phantom{= +} ~\alpha(n - d) + \alpha(n - d - 1) + \dots + \alpha(n - 2 d + 1) \\[-3mm]
& \phantom{=} ~+ \\[-3mm]
& \phantom{= +} ~\alpha(n - 2 d) + \alpha(n - 2 d - 1) + \dots + \alpha(n - 3 d + 1) \\[-3mm]
& \phantom{=} ~+ \\[-3mm]
& \phantom{=} ~~\vdots \\[-3mm]
& \phantom{=} ~+ \\[-3mm]
& \phantom{= +} ~\alpha(r_n) + \alpha(r_n - 1) + \dots + \alpha(r_n - d + 1) \\[1mm]
& = \alpha(n) + \alpha(n - 1) + \dots + \alpha(r_n - d + 1) .
\end{align*}
But since $- d + 1 \leq r_n - d + 1 \leq 0$ and $\alpha(k) = 0$ for $k = - 1 , - 2 , \dots , - d + 1$, it follows that:
$$
P(n) + P(n - d) + P(n - 2 d) + \dots = \alpha(n) + \alpha(n - 1) + \dots + \alpha(0) ,
$$
which is polynomial in $n$ (since $\alpha \in \R[X]$). This concludes that $P \in \E_d$, as required. The proof of the theorem is complete.
\end{proof}

\subsection*{Study of the spaces $\E_d$ through formal power series and generalization}
We briefly present in what follows a new approach (using convolution products and formal power series) to studying the $\R$-vector space $\E_d$ ($d \in \N^*$) and expressing in a closed form the sums of the type:
$$
P(n) + P(n - d) + P(n - 2 d) + \dots ~~~~~~~~~~ (P \in \R[X] , n \in \N) .
$$
Then, we lean on the obtained result in order to generalize the spaces $\E_d$. The results concerning the generalized spaces in question, which are analog to the above results on the $\E_d$'s, are given without proofs.

By definition, \textit{the ordinary convolution product} of two real sequences $\mathbf{a} = {(a_n)}_{n \in \N}$ and $\mathbf{b} = {(b_n)}_{n \in \N}$ is the real sequence ${(\mathbf{a} * \mathbf{b})}_{n \in \N}$, defined by:
$$
{(\mathbf{a} * \mathbf{b})}_n := \sum_{k = 0}^{n} a_k b_{n - k} ~~~~~~~~~~ (\forall n \in \N) .
$$
It is known that the convolution product constitutes a commutative and associative law of composition on the set of the real sequences. In addition, the Dirac delta sequence $\boldsymbol{\delta} = {(\delta_n)}_{n \in \N}$, defined by: $\delta_0 = 1$ and $\delta_n = 0$ ($\forall n \geq 1$) is the neutral element of $*$. Finally, every real sequence ${(a_n)}_{n \in \N}$ with $a_0 \neq 0$ has an inverse element for $*$. Further, \textit{the ordinary generating function} of a given real sequence $\mathbf{u} = {(u_n)}_{n \in \N}$ is the formal power series defined by:
$$
\sigma(\mathbf{u}) := \sum_{n = 0}^{+ \infty} u_n X^n .
$$
When $\mathbf{a} = {(a_n)}_{n \in \N}$ and $\mathbf{b} = {(b_n)}_{n \in \N}$ are two real sequences, we have the fundamental formula:
$$
\sigma(\mathbf{a}) \cdot \sigma(\mathbf{b}) = \sigma(\mathbf{a} * \mathbf{b}) .
$$
Next, for a positive integer $d$, let $\boldsymbol{\delta}^{(d)} = {(\delta_n^{(d)})}_{n \in \N}$ denote the real sequence defined by:
$$
\delta_n^{(d)} := \begin{cases}
1 & \text{if $n$ is a multiple of $d$} \\
0 & \text{otherwise}
\end{cases} ~~~~~~~~~~ (\forall n \in \N) .
$$
(Notice that $\boldsymbol{\delta}^{(1)} \equiv 1$ and $\boldsymbol{\delta}^{(+ \infty)} = \boldsymbol{\delta}$). So, the ordinary generating function of $\boldsymbol{\delta}^{(d)}$ ($d \in \N^*$) is given by:
\begin{align*}
\sigma(\boldsymbol{\delta}^{(d)}) & := \sum_{n = 0}^{+ \infty} \delta_n^{(d)} X^n \\
& = \sum_{\begin{subarray}{c}
n \in \N \\
n \equiv \restmod{0}{d}
\end{subarray}} X^n \\
& = \sum_{k = 0}^{+ \infty} X^{d k} ;
\end{align*}
that is
\begin{equation}\label{eqalpha}
\sigma(\boldsymbol{\delta}^{(d)}) = \frac{1}{1 - X^d} .
\end{equation}

For simplicity, for $P \in \R[X]$ and $d \in \N^*$, we let $S_{P , d}$ denote the real sequence of general term $S_{P , d}(n)$ ($n \in \N$) defined by:
$$
S_{P , d}(n) := P(n) + P(n - d) + P(n - 2 d) + \dots
$$
(recall that the sum on the right stops at $P(r)$, where $r$ is the remainder of the euclidean division of $n$ by $d$). So, given $d$ a positive integer and $P$ a real polynomial, we have for any natural number $n$:
\begin{align*}
S_{P , d}(n) & := P(n) + P(n - d) + P(n - 2 d) + \dots \\
& = \sum_{k = 0}^{n} \delta_k^{(d)} P(n - k) \\
& = \left(\boldsymbol{\delta}^{(d)} * P\right)(n) ;
\end{align*}
hence the fundamental formula of this approach:
\begin{equation}\label{eqbeta}
S_{P , d} = \boldsymbol{\delta}^{(d)} * P .
\end{equation}

We also need for this approach the following well-known theorem which characterizes the ordinary generating functions of polynomial sequences.

\begin{thmn}[{\cite[Corollary 4.3.1]{sta}}]\label{t-T}
Let ${(a_n)}_{n \in \N}$ be a non-zero real sequence. Then ${(a_n)}_n$ is polynomial in $n$ if and only if its ordinary generating function is a rational function of the form $\frac{U(X)}{(1 - X)^{\alpha}}$, with $\alpha \in \N^*$, $U \in \R[X]$, $\deg U < \alpha$ and $U(1) \neq 0$. In addition, in the situation where $\sigma(a_n) = \frac{U(X)}{(1 - X)^{\alpha}}$ {\rm(}with the above conditions on $U$ and $\alpha${\rm)}, the degree of $a_n$ {\rm(}as a polynomial in $n${\rm)} is equal to $(\alpha - 1)$.
\end{thmn}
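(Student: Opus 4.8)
The plan is to reduce the whole statement to the single generating-function identity
$$
\sum_{n = 0}^{+\infty} \binom{n + j}{j} X^n = \frac{1}{(1 - X)^{j + 1}} \qquad (\forall j \in \N) ,
$$
which I would obtain by repeatedly differentiating the geometric series $\frac{1}{1 - X} = \sum_{n \geq 0} X^n$ (equivalently, by the negative binomial theorem). Alongside it I would use the elementary fact that the family ${\left(\binom{X + j}{j}\right)}_{0 \leq j \leq d}$ is a basis of $\R_d[X]$: these are $d + 1$ polynomials of pairwise distinct degrees $0, 1, \dots, d$, hence linearly independent, and $\dim \R_d[X] = d + 1$.

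For the direct implication I would suppose that ${(a_n)}_n$ is polynomial in $n$ of degree $d$, and expand it in the above basis as $a_n = \sum_{j = 0}^{d} c_j \binom{n + j}{j}$, where $c_d \neq 0$ since the top-degree term comes from $j = d$. Summing the generating functions term by term and reducing to the common denominator $(1 - X)^{d + 1}$ gives
$$
\sigma(a_n) = \sum_{j = 0}^{d} \frac{c_j}{(1 - X)^{j + 1}} = \frac{U(X)}{(1 - X)^{d + 1}} , \qquad U(X) := \sum_{j = 0}^{d} c_j (1 - X)^{d - j} .
$$
Setting $\alpha := d + 1$, the three required conditions are immediate: $U \in \R[X]$ with $\deg U \leq d < \alpha$, and $U(1) = c_d \neq 0$ because every summand with $d - j > 0$ vanishes at $X = 1$. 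This simultaneously establishes the degree claim, $d = \alpha - 1$, in this direction.

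For the converse I would start from $\sigma(a_n) = \frac{U(X)}{(1 - X)^{\alpha}}$ with $\alpha \in \N^*$, $\deg U < \alpha$ and $U(1) \neq 0$. Since the rational function is proper, its partial-fraction decomposition over $\R$ has no polynomial part and only the pole at $X = 1$, so
$$
\frac{U(X)}{(1 - X)^{\alpha}} = \sum_{j = 1}^{\alpha} \frac{\beta_j}{(1 - X)^{j}} , \qquad \beta_\alpha = U(1) \neq 0 ,
$$
where $\beta_\alpha$ is read off by multiplying through by $(1 - X)^\alpha$ and setting $X = 1$. Extracting coefficients via the fundamental identity yields $a_n = \sum_{j = 1}^{\alpha} \beta_j \binom{n + j - 1}{j - 1}$, which is polynomial in $n$ of degree $\leq \alpha - 1$; the degree is exactly $\alpha - 1$ because the leading contribution $\beta_\alpha \binom{n + \alpha - 1}{\alpha - 1}$ has nonzero leading coefficient. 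This gives both the equivalence and the degree assertion.

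The only delicate point I anticipate is the bookkeeping tying the condition $U(1) \neq 0$ to the leading coefficient of $a_n$: in the direct direction one must see $U(1)$ collapse to the top coefficient $c_d$, and in the converse that $\beta_\alpha = U(1)$ persists as the top-degree coefficient of the binomial expansion. Everything else is a routine application of the identity above together with partial fractions. In particular, the properness hypothesis $\deg U < \alpha$ is precisely what rules out a polynomial part, which would otherwise affect only finitely many $a_n$ and thereby destroy polynomiality for all $n$.
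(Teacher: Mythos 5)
Your proof is correct, but note that the paper offers no proof of this statement to compare against: it is quoted as a known result from Stanley \cite{sta} (Corollary 4.3.1), so the theorem is imported, not proved, in the source. Your argument is the standard self-contained one and it goes through: in the forward direction you expand a degree-$d$ polynomial sequence in the basis ${\left(\binom{n + j}{j}\right)}_{0 \leq j \leq d}$ of $\R_d[X]$ (valid, since these have pairwise distinct degrees) and sum $\sum_{n} \binom{n + j}{j} X^n = (1 - X)^{-(j + 1)}$; the resulting $U(X) = \sum_{j} c_j (1 - X)^{d - j}$ satisfies $U(1) = c_d \neq 0$ exactly as you say. In the converse you expand $U$ in powers of $(1 - X)$ (this is all the ``partial fractions'' amounts to here, since the only pole is at $X = 1$), and the leading term $\beta_{\alpha} \binom{n + \alpha - 1}{\alpha - 1}$ with $\beta_{\alpha} = U(1) \neq 0$ gives degree exactly $\alpha - 1$, which settles the degree assertion in full generality. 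The one loose sentence is your closing aside that a polynomial part ``would destroy polynomiality for all $n$'': adding a polynomial in $X$ to the generating function changes only finitely many coefficients $a_n$, and the right reason such a part cannot occur for a polynomial sequence is that two polynomials agreeing for all sufficiently large $n$ must coincide. Since neither direction of your proof actually invokes that remark, nothing is affected; the proof stands as a clean elementary substitute for the citation.
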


According to Theorem \ref{t-T}, it is associated to every polynomial $P \in \R[X]$ a unique polynomial $A_P \in \R[X]$ satisfying the identity of formal power series:
\begin{equation}\label{eqgamma}
\sum_{n = 0}^{+ \infty} P(n) X^n = \frac{A_P(X)}{(1 - X)^{\deg P + 1}} .
\end{equation}
If, in addition, $P \neq 0_{\R[X]}$ then we have $A_P \neq 0_{\R[X]}$, $A_P(1) \neq 0$ and $\deg A_P \leq \deg P$. The correspondence $P \mapsto A_P$ provides a simple characterization of the property of belonging to a certain space $\E_d$ for a given real polynomial. This characterization inspires us a generalization of the $\R$-vector spaces $\E_d$. First, we have the following proposition:
\begin{prop}\label{p1}
Let $d$ be a positive integer and $P$ be a real polynomial. Then $P$ belongs to $\E_d$ if and only if $A_P$ is a multiple {\rm(}in $\R[X]${\rm)} of the polynomial $(1 + X + X^2 + \dots + X^{d - 1})$.
\end{prop}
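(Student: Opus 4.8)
The plan is to route everything through the fundamental formula \eqref{eqbeta}, namely $S_{P,d} = \boldsymbol{\delta}^{(d)} * P$, together with the multiplicativity of ordinary generating functions under convolution. Taking generating functions and using \eqref{eqalpha} and \eqref{eqgamma}, I would obtain, for $P \neq 0_{\R[X]}$,
$$
\sigma(S_{P,d}) = \sigma(\boldsymbol{\delta}^{(d)}) \cdot \sigma(P) = \frac{1}{1 - X^d} \cdot \frac{A_P(X)}{(1 - X)^{\deg P + 1}} .
$$
The key opening move is to factor $1 - X^d = (1 - X)(1 + X + \dots + X^{d-1})$; writing $\Phi(X) := 1 + X + \dots + X^{d-1}$, this becomes
$$
\sigma(S_{P,d}) = \frac{A_P(X)}{\Phi(X)\,(1 - X)^{\deg P + 2}} .
$$
The case $P = 0_{\R[X]}$ is trivial, since then $A_P = 0_{\R[X]}$ is a multiple of $\Phi$ and $0_{\R[X]} \in \E_d$.

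Next I would invoke Theorem \ref{t-T}. First note that $S_{P,d} \neq 0$ whenever $P \neq 0_{\R[X]}$: indeed $\boldsymbol{\delta}^{(d)}$ is invertible for $*$ (its first term equals $1 \neq 0$), so $\boldsymbol{\delta}^{(d)} * P = 0$ would force $P = 0$ as a sequence, hence $P = 0_{\R[X]}$. By Theorem \ref{t-T}, the condition $P \in \E_d$ is then equivalent to $\sigma(S_{P,d})$ being of the form $\frac{U(X)}{(1 - X)^{\alpha}}$ with $\alpha \in \N^*$, $U \in \R[X]$, $\deg U < \alpha$ and $U(1) \neq 0$. Thus the whole statement reduces to showing that this property holds if and only if $\Phi \mid A_P$.

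For the ``if'' direction, assuming $A_P = \Phi \cdot B$, I would substitute to get $\sigma(S_{P,d}) = \frac{B(X)}{(1 - X)^{\deg P + 2}}$ and check the hypotheses of Theorem \ref{t-T}: since $A_P(1) \neq 0$ (as $P \neq 0_{\R[X]}$) and $\Phi(1) = d \neq 0$, we get $B(1) \neq 0$, while $\deg B = \deg A_P - (d-1) \leq \deg P < \deg P + 2$. Hence $S_{P,d}$ is polynomial and $P \in \E_d$. For the ``only if'' direction, I would equate the two expressions for the rational function $\sigma(S_{P,d})$ and cross-multiply, obtaining the polynomial identity
$$
U(X)\,\Phi(X)\,(1 - X)^{\deg P + 2} = A_P(X)\,(1 - X)^{\alpha} .
$$

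The crux — and the one step demanding care — is the multiplicity bookkeeping of the irreducible factor $(1 - X)$ in this last identity, which hinges precisely on the coprimality of $\Phi$ and $(1 - X)$, that is, on $\Phi(1) = d \neq 0$. Since $U(1)\Phi(1) \neq 0$, the exact power of $(1 - X)$ dividing the left-hand side is $\deg P + 2$; since $A_P(1) \neq 0$, the exact power dividing the right-hand side is $\alpha$. Unique factorization in $\R[X]$ then forces $\alpha = \deg P + 2$, after which $(1 - X)^{\deg P + 2}$ cancels and leaves $A_P(X) = U(X)\,\Phi(X)$, so $\Phi \mid A_P$, as wanted. I anticipate no genuine difficulty beyond this valuation argument; the only points to watch are the non-vanishing facts $A_P(1) \neq 0$, $U(1) \neq 0$ and $\Phi(1) \neq 0$ that let one read off the exact exponents, and the preliminary observation $S_{P,d} \neq 0$ that makes Theorem \ref{t-T} applicable.
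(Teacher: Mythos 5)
Your proposal is correct and follows essentially the same route as the paper: both pass through $\sigma(S_{P,d}) = \frac{A_P(X)}{(1+X+\dots+X^{d-1})(1-X)^{\deg P + 2}}$ and apply Theorem \ref{t-T} in each direction. The only (cosmetic) differences are that you explicitly verify $S_{P,d}\neq 0$ before invoking Theorem \ref{t-T} (a hypothesis the paper leaves implicit), and in the converse you pin down $\alpha = \deg P + 2$ by counting the order of vanishing at $X=1$ rather than concluding directly via Gauss's lemma from the coprimality of $(1+X+\dots+X^{d-1})$ and $(1-X)^{\alpha}$ — both arguments rest on the same fact $\Phi(1)=d\neq 0$.
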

\begin{proof}
The result of the proposition is obvious for $P = 0_{\R[X]}$. Suppose for the sequel of this proof that $P \neq 0_{\R[X]}$ and set $s := \deg P \in \N$. By using successively Formulas \eqref{eqbeta}, \eqref{eqalpha} and \eqref{eqgamma}, we get
\begin{align}
\sigma(S_{P , d}) & = \sigma(\boldsymbol{\delta}^{(d)} * P) \notag \\
& = \sigma(\boldsymbol{\delta}^{(d)}) \cdot \sigma(P) \notag \\
& = \frac{1}{1 - X^d} \sum_{n = 0}^{+ \infty} P(n) X^n \notag \\
& = \frac{1}{(1 - X) (1 + X + X^2 + \dots + X^{d - 1})} \cdot \frac{A_P(X)}{(1 - X)^{s + 1}} \notag \\
& = \frac{A_P(X)}{(1 + X + X^2 + \dots + X^{d - 1}) (1 - X)^{s + 2}} . \label{eqdelta}
\end{align}
\textbullet{} Suppose that $A_P$ is a multiple (in $\R[X]$) of the polynomial $(1 + X + X^2 + \dots + X^{d - 1})$ and let $U(X) := \frac{A_P(X)}{1 + X + X^2 + \dots + X^{d - 1}} \in \R[X]$. Then $\deg U = \deg A_P - (d - 1) \leq \deg A_P \leq \deg P = s$ and $U(1) = \frac{A_P(1)}{d} \neq 0$. Next, Formula \eqref{eqdelta} becomes: $\sigma(S_{P , d}) = \frac{U(X)}{(1 - X)^{s + 2}}$, which implies (according to Theorem \ref{t-T}) that $S_{P , d}(n)$ is polynomial in $n$; that is $P \in \E_d$. \\[1mm]
\textbullet{} Conversely, suppose that $P \in \E_d$; that is $S_{P , d}(n)$ is polynomial in $n$. By Theorem \ref{t-T}, the power series $\sigma(S_{P , d})$ has the form:
$$
\sigma\left(S_{P , d}\right) = \frac{U(X)}{(1 - X)^{\alpha}} ,
$$
where $\alpha \in \N^*$, $U \in \R[X]$, $\deg U < \alpha$ and $U(1) \neq 0$. It follows (according to \eqref{eqdelta}) that:
$$
\frac{U(X)}{(1 - X)^{\alpha}} = \frac{A_P(X)}{\left(1 + X + X^2 + \dots + X^{d - 1}\right) \left(1 - X\right)^{s + 2}} .
$$
Thus
$$
\left(1 + X + X^2 + \dots + X^{d - 1}\right) \left(1 - X\right)^{s + 2} U(X) = \left(1 - X\right)^{\alpha} A_P(X) .
$$
This shows in particular that $(1 + X + X^2 + \dots + X^{d - 1}) \mid (1 - X)^{\alpha} A_P(X)$. But since $(1 + X + X^2 + \dots + X^{d - 1})$ is coprime with $(1 - X)^{\alpha}$ (because $(1 + X + X^2 + \dots + X^{d - 1})$ does not vanish at $X = 1$), it follows (according to Gauss's lemma) that $(1 + X + X^2 + \dots + X^{d - 1}) \mid A_P(X)$, as required. This completes the proof of the proposition.
\end{proof}

By carefully observing the proof of Proposition \ref{p1}, we see that the polynomial $(1 + X + X^2 + \dots + X^{d - 1})$ which appears there has no particularity apart from the fact that it does not vanish at $X = 1$. This inspires us a generalization of the spaces $\E_d$ in the following way. 

\begin{defi}[Generalization of the spaces $\E_d$]
Let $D \in \R[X]$ such that $D(1) \neq 0$. We define $\E_D$ as the set of all polynomials $P \in \R[X]$ for which the polynomial $A_P$ is a multiple of $D$ (in $\R[X]$).
\end{defi}

Notice that for all positive integer $d$, we have (according to Proposition \ref{p1})
$$
\E_d = \E_{(1 + X + X^2 + \dots + X^{d - 1})} .
$$

Further, we show quite easily that for all $D \in \R[X]$, with $D(1) \neq 1$, the set $\E_D$ constitutes a $\R$-linear subspace of $\R[X]$. A basis of $\E_D$ (as a $\R$-vector space) is specified by the following theorem, the proof of which (left to the reader's imagination) mainly uses Theorem \ref{t-T}.

\begin{thm}\label{t3}
Let $d$ be a positive integer and $D$ be a real polynomial of degree $(d - 1)$ such that $D(1) \neq 0$. Let $h \in \R[X]$ defined by the formal power series identity:
$$
\frac{D(X)}{(1 - X)^{\alpha}} = \sum_{n = 0}^{+ \infty} h(n) X^n .
$$
Then, the family of polynomials $(h , h * 1 , h * X , h * X^2 , \dots)$ constitutes a basis for the $\R$-vector space $\E_D$. \hfill $\square$ 
\end{thm}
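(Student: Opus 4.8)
The plan is to read everything off the correspondence $P \mapsto A_P$ of \eqref{eqgamma} together with the characterization Theorem \ref{t-T}. First I would fix the exponent to be $\alpha = d$, which is in fact forced: since $\deg D = d - 1 < d$ and $D(1) \neq 0$, Theorem \ref{t-T} guarantees that $\sum_{n} h(n) X^n = \frac{D(X)}{(1 - X)^{d}}$ really is the generating function of a polynomial sequence $h$, of degree $d - 1$, and comparing with \eqref{eqgamma} gives $A_h = D$, so in particular $h \in \E_D$. For each $k \in \N$ I would write $A_{X^k}$ for the numerator attached to the monomial $X^k$ by \eqref{eqgamma}, so that $\sum_n n^k X^n = \frac{A_{X^k}(X)}{(1 - X)^{k + 1}}$ with $\deg A_{X^k} \leq k$ and $A_{X^k}(1) \neq 0$ (the latter because $X^k \neq 0_{\R[X]}$, by the properties recorded just after \eqref{eqgamma}).

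Next I would compute the generating function of each $h * X^k$. Using the fundamental formula $\sigma(\mathbf{a}) \cdot \sigma(\mathbf{b}) = \sigma(\mathbf{a} * \mathbf{b})$ and \eqref{eqgamma},
$$
\sigma(h * X^k) = \sigma(h) \cdot \sigma(X^k) = \frac{D(X)\, A_{X^k}(X)}{(1 - X)^{d + k + 1}} .
$$
The numerator $D \cdot A_{X^k}$ has degree $\leq (d - 1) + k < d + k + 1$ and does not vanish at $X = 1$, since $D(1) A_{X^k}(1) \neq 0$; hence Theorem \ref{t-T} applies and shows that $h * X^k$ is polynomial in $n$, of degree exactly $d + k$, while comparison with \eqref{eqgamma} yields $A_{h * X^k} = D \cdot A_{X^k}$. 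This being a multiple of $D$, we get $h * X^k \in \E_D$. Thus the whole family lies in $\E_D$, and — this is the structural fact that drives the rest — its members have pairwise distinct degrees, namely $d - 1$ for $h$ and $d + k$ for $h * X^k$, so the list of degrees is exactly $d - 1, d, d + 1, \dots$, every integer $\geq d - 1$ occurring once.

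Linear independence is then immediate: any nontrivial finite linear combination has a unique term of maximal degree, whose leading coefficient survives, so the combination is nonzero. For the spanning part I would argue by descending induction on the degree. The starting observation is that a nonzero $P \in \E_D$ must have degree $\geq d - 1$: writing $A_P = D g$ with $g \in \R[X]$ (necessarily $g \neq 0$ since $A_P \neq 0$), the properties following \eqref{eqgamma} give $\deg P \geq \deg A_P = (d - 1) + \deg g \geq d - 1$. Now, given nonzero $P \in \E_D$ of degree $s \geq d - 1$, let $b_s$ be the unique family member of degree $s$ (so $b_s = h$ if $s = d - 1$ and $b_s = h * X^{s - d}$ otherwise); choosing $\lambda$ as the ratio of the leading coefficients of $P$ and $b_s$, the difference $P - \lambda b_s$ still lies in $\E_D$ (a linear subspace) and has strictly smaller degree. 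Iterating lowers the degree at each step, and the bound just proved forces the process to terminate at the zero polynomial, thereby expressing $P$ as a finite linear combination of the family.

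The only genuinely delicate point is the degree bookkeeping underlying the two previous paragraphs: the argument hinges on the exponents of $(1 - X)$ in the computed generating functions being $d, d + 1, d + 2, \dots$, which produce exactly the consecutive degrees $d - 1, d, d + 1, \dots$, matched against the minimal degree $d - 1$ of a nonzero element of $\E_D$. Both the nonvanishing at $X = 1$ (needed so that Theorem \ref{t-T} reads off the correct exponent) and the degree count rest solely on $D(1) \neq 0$ and $A_{X^k}(1) \neq 0$; once these are secured, the coprimality subtleties that appeared in Proposition \ref{p1} do not recur, since here $A_{h * X^k}$ is read directly from a product already in reduced form with respect to $(1 - X)$.
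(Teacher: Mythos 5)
Your proof is correct; note that the paper itself gives no proof of Theorem \ref{t3} (it is explicitly ``left to the reader's imagination'' with the sole hint that it mainly uses Theorem \ref{t-T}), and your argument --- computing $\sigma(h * X^k) = \frac{D(X) A_{X^k}(X)}{(1 - X)^{d + k + 1}}$, reading off $A_{h * X^k} = D \cdot A_{X^k}$ and the exact degrees $d - 1, d, d + 1, \dots$ from Theorem \ref{t-T}, then combining distinct degrees with the lower bound $\deg P \geq d - 1$ for nonzero $P \in \E_D$ --- is precisely the intended kind of argument. You also rightly resolved the statement's typo by observing that the exponent $\alpha$ must be $d$ for $h$ to be a polynomial sequence with $A_h = D$.
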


\begin{rmk}
We can derive our main theorem \ref{t1} on the spaces $\E_d$ from the general theorem \ref{t3} as follows.
\begin{itemize}
\item First, show that we have for all positive integer $d$:
$$
\frac{1 + X + X^2 + \dots + X^{d - 1}}{(1 - X)^d} = \sum_{n = 0}^{+ \infty} \left(\binom{n + d}{d} - \binom{n}{d}\right) X^n .
$$
\item (Less easy!). Show the following statement:
\begin{quote}
\textit{%
For all positive integer $d$ and all polynomial $f \in \R[X]$, there exists a unique polynomial $g \in \R[X]$, which vanishes at $(-1)$, such that:
$$
\left(\binom{n + d}{d} - \binom{n}{d}\right) * f(n) = \binom{n + d}{d} g(n + d) - \binom{n}{d} g(n) .
$$
In addition, the correspondence $f \mapsto g$ from $\R[X]$ to $\{g \in \R[X] : g(-1) = 0\}$ is bijective.
}
\end{quote}
\end{itemize}
\end{rmk}

We finally derive from Theorem \ref{t3} the below corollary, generalizing Corollary \ref{coll2}.

\begin{coll}
In the context of Theorem \ref{t3}, we have
$$
\E_D \oplus \R_{d - 2}[X] = \R[X] .
$$
In particular, we have
\begin{equation}
\codim_{\R[X]} \E_D = d - 1 . \tag*{$\square$}
\end{equation}
\end{coll}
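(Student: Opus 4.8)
The plan is to read the complement directly off the basis of $\E_D$ supplied by Theorem \ref{t3}, using the elementary principle that any family of polynomials containing exactly one polynomial of each degree $0,1,2,\dots$ is automatically a basis of $\R[X]$ (such a family is triangular with respect to the monomial basis, hence free, and spans $\R[X]$ by an immediate induction on the degree).

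First I would pin down the degrees of the basis polynomials $(h,\,h*1,\,h*X,\,h*X^2,\dots)$ of $\E_D$. Since $\deg D=d-1$ and $D(1)\neq 0$, here the exponent in Theorem \ref{t3} is $\alpha=d$, and the identity $\frac{D(X)}{(1-X)^{d}}=\sum_{n}h(n)X^n$ combined with Theorem \ref{t-T} shows that $h$ is polynomial in $n$ of degree exactly $d-1$. For the convolution terms I would compute, using \eqref{eqgamma},
\[
\sigma\big(h*X^k\big)=\sigma(h)\cdot\sigma(X^k)=\frac{D(X)}{(1-X)^{d}}\cdot\frac{A_{X^k}(X)}{(1-X)^{k+1}}=\frac{D(X)\,A_{X^k}(X)}{(1-X)^{d+k+1}} .
\]
The numerator has degree $(d-1)+k<d+k+1$ and does not vanish at $X=1$, since $D(1)\neq 0$ and $A_{X^k}(1)\neq 0$; hence Theorem \ref{t-T} gives that $h*X^k$ is polynomial in $n$ of degree exactly $d+k$. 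Consequently the basis of $\E_D$ contains exactly one polynomial of each degree $d-1,\,d,\,d+1,\dots$.

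It then remains to combine this basis with the canonical basis $1,X,\dots,X^{d-2}$ of $\R_{d-2}[X]$, whose members have degrees $0,1,\dots,d-2$. The union is a family of polynomials having exactly one member of each degree $0,1,2,\dots$, hence a basis of $\R[X]$ by the principle recalled above. Since this basis is partitioned into the basis of $\E_D$ and the basis of $\R_{d-2}[X]$, we conclude at once that $\E_D\oplus\R_{d-2}[X]=\R[X]$, and therefore $\codim_{\R[X]}\E_D=\dim\R_{d-2}[X]=d-1$.

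The only delicate point is the exact determination of the degrees, and especially the guarantee that the degree of $h*X^k$ does not drop below $d+k$; this is precisely where the hypothesis $D(1)\neq 0$ enters, through the ``$U(1)\neq 0$'' clause of Theorem \ref{t-T}. Once the degrees are fixed, the remainder is the purely formal observation that a degree-graded family is a basis and requires no computation. (One could instead bypass Theorem \ref{t3} altogether by analysing, for each $s\in\N$, the linear isomorphism $\R_s[X]\to\R_s[X]$ sending $P$ to the numerator of $\sum_n P(n)X^n$ over $(1-X)^{s+1}$ and tracking the multiples of $D$; but the basis route above is shorter.)
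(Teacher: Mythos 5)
Your argument is correct, and it in fact supplies a proof where the paper gives none: this corollary is one of the results on the generalized spaces $\E_D$ that the paper explicitly states without proof (hence the terminal $\square$). The verification of the degrees is sound: with $\alpha=d$ in Theorem \ref{t3} (the paper's $\alpha$ there is indeed $d=\deg D+1$), Theorem \ref{t-T} gives $\deg h=d-1$, and since $\sigma(X^k)=A_{X^k}(X)/(1-X)^{k+1}$ with $\deg A_{X^k}\leq k$ and $A_{X^k}(1)=k!\neq 0$, the product $D(X)A_{X^k}(X)$ has degree $<d+k+1$ and does not vanish at $1$, so $\deg(h*X^k)=d+k$ exactly. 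The degree-graded family $\{1,X,\dots,X^{d-2}\}\cup\{h,h*1,h*X,\dots\}$ then has exactly one member of each degree, and the partitioned-basis argument yields the direct sum. It is worth comparing your route with how the paper proves the special case, Corollary \ref{coll2} (the case $D=1+X+\dots+X^{d-1}$): there the author shows $\E_d\cap\R_{d-2}[X]=\{0\}$ by a degree count on the representation $P=\binom{X+d}{d}f(X+d)-\binom{X}{d}f(X)$, and shows $\E_d+\R_{d-2}[X]=\R[X]$ constructively, by taking a discrete antiderivative $P^*$ of $P(dX)$ and performing Euclidean division by $\binom{dX}{d}$; that method is explicit (it is what later drives the computation of the constants $c_n$ in Theorem \ref{t2}) but does not transfer verbatim to a general $D$. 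Your argument is shorter once Theorem \ref{t3} is granted, works uniformly for all admissible $D$, and gives as a by-product that $\E_D$ contains exactly one basis element of each degree $\geq d-1$; its only cost is that it leans entirely on Theorem \ref{t3}, whose proof the paper also omits.
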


\end{document}